\providecommand{\U}[1]{\protect\rule{.1in}{.1in}}
\providecommand{\U}[1]{\protect\rule{.1in}{.1in}}
\theoremstyle{plain}
\numberwithin{equation}{section}
\newtheorem{theorem}{Theorem}[section]
\newtheorem{lemma}[theorem]{Lemma}
\newtheorem{proposition}[theorem]{Proposition}
\newtheorem{corollary}[theorem]{Corollary}
\theoremstyle{definition}
\theoremstyle{remark}
\newtheorem*{claim*}{Claim}
\newtheorem*{example*}{Example}
\newtheorem*{remark*}{Remark}
\newtheorem{remark}{Remark}[section]
\begin{document}
\title[log-concave]{Regularity for a log-concave to log-concave mass transfer problem with near
Euclidean cost}
\author{Micah Warren}
\address{Department of Mathematics, Princeton University\\
Princeton NJ 08540}
\email{mww@math.princeton.edu}
\thanks{The author is partially supported by an NSF grant 0901644.}
\date{\today}
\maketitle

\begin{abstract}
If the cost function is not too far from the Euclidean cost, then the optimal
map transporting Gaussians restricted to a ball will be regular. \ Similarly,
given any cost function which is smooth in a neighborhood of two points on a
manifold, there are small neighborhoods near each such that a Gaussian
restricted to one is transported smoothly to a Gaussian on the other.

\end{abstract}

\section{\bigskip\bigskip Introduction}

This note deals with the regularity of the optimal transportation map, when
the distributions under consideration are close to restricted Gaussians. From the work
of Ma, Trudinger and Wang, ([MTW], [TW]) regularity holds for arbitrary smooth
distributions on nice domains when the cost satisfies the MTW\ A3s condition.
\ \ \ It is established by Loeper [L] that without this MTW condition on the
cost function, one cannot expect regularity for arbitrary smooth
distributions, and the question of regularity is wide open. \ Here we show
that we can find smooth optimal transportation, at least for some very nice
distributions. \ 

We show two results. \ The first is that when the transportation problem
involves distributions somewhat like the standard Gaussian restricted to the
unit ball, then if the cost function is close enough to the Euclidean distance
squared cost, the map must be regular. \ As a corollary, given two points and
any cost which is smooth near these points, we can find very focused
Gaussians, restricted to very small balls near the points, so that the optimal
transport is regular. \ \ \ 

Our method yields a way to compute precisely how close the cost function need
be to Euclidean, or relatedly, how small the balls must be around the given
points. \ Recently other perturbatitive results for regularity of optimal
transport have appeared:\ \ Delano\"{e} and Ge [DG] show regularity
for certain densities on metrics near constant curvature. \ Caffarelli,
Gonzalez and Nguyen\ [CGN] present estimates, when the cost is Euclidean
distance raised to powers other than $2.$

Specifically, let $f,\bar{f}$ \ be functions on regions $\Omega,\bar{\Omega
}\subset%
\mathbb{R}
^{n},$ satisfying on $\Omega$
\begin{align}
|Df|  &  \leq1\tag{a1}\\
1  &  \leq\delta\leq D^{2}f\leq2\tag{a2}\\
|D^{3}f|  &  \leq1 \tag{a3}%
\end{align}
and similarly for $\bar{f}$ on $\bar{\Omega}.$

We define the following mass distributions
\begin{equation}
m=e^{-f(x)}\chi_{\Omega} \label{mass}%
\end{equation}

\begin{equation}
\bar{m}=e^{-\bar{f}(\bar{x})}\chi_{\bar{\Omega}} \label{massbar}%
\end{equation}
where we may add a constant to $f$ so that both distributions have the same
total mass.\ 

The region $\Omega$ will be required to have a defining function $h$ so that
on $\Omega=\left\{  h\leq0\right\}  ,$ $h$ satisfies the same three conditions
(a1-3) as $f,$ as well as, along the boundary $\partial\Omega$
\begin{equation}
|Dh|\text{ }\geq1/2, \label{beta}%
\end{equation}
which implies the second fundamental form of the set $\partial\Omega=\left\{
h=0\right\}  $ is bounded by $4.$ \ Similarly define an $\bar{h},~\bar{\Omega
}.$ \ 

A solution of the optimal transportation equation for these densities and a
given cost function $c(x,\bar{x})$ is a function $u(x)$ which satisfies%
\begin{align}
\det w_{ij}  &  =e^{-f(x)}e^{\bar{f}\left(  T(x,Du)\right)  }|\det
c_{is}(x,T(x,Du))|\label{OT}\\
T(x,Du)\left(  \Omega\right)   &  =\bar{\Omega}%
\end{align}
where%
\begin{equation}
w_{ij}=u_{ij}(x)-c_{ij}(x,T(x,Du))=c_{si}T_{j}^{s} \label{this is w}%
\end{equation}
and $T(x,Du)=(T^{1},T^{2},\ldots,T^{n})\subset\bar{\Omega}$ \ is determined by%
\[
u_{i}(x)=c_{i}(x,T(x,Du)).
\]
(Such a solution must also be $c$-convex. \ In our setting, the two notions of
convexity are very close, so we won't belabour this point here, \ see Lemma
\ref{c convexity lemma}.) We will use the following convention: The
derivatives of the cost function in the first variable $x$ will be $i,j,k$
etc. \ The second variable $\bar{x}$ will be denoted by indices $p,s,t,$ etc.
\ Also upper index denotes inverse i.e $c^{is}=(c_{is})^{-1}$.

The cost $c(x,\bar{x})$ will satisfy the standard conditions (A1) and (A2) but
not (A3) (see for example [MTW] section 2.) We will require further that the
second derivatives of the cost satisfy the following assumptions
\begin{equation}
\left\Vert \left(  c^{is}-I\right)  \right\Vert \leq\epsilon_{0}\leq1/20
\tag{c-a1}%
\end{equation}

\begin{equation}
C(n)\left(  \left\Vert D^{3}c\right\Vert +\left\Vert D^{4}c\right\Vert
\right)  \leq\epsilon_{0}\leq1/20 \tag{c-a2}%
\end{equation}
where $C(n)$ is a dimensional constant, and the derivative norms are with
respect to both barred and unbarred directions. \ \ Finally we will require
that the densities are somewhat close to uniform
\begin{equation}
e^{-f(x)}e^{\bar{f}\left(  \bar{x}\right)  }|\det c_{is}|\in\lbrack
\Lambda^{-1},\Lambda] \tag{cm-a3}%
\end{equation}
for all $x,\bar{x}$ $\in\Omega\times\bar{\Omega}$ with
\begin{equation}
\Lambda\leq\left(  \frac{n}{3/2}\right)  ^{n}. \tag{cm-a3b}%
\end{equation}
We are now ready to state our result.

\begin{theorem}
Let $m,\bar{m}$ be the mass densities defined by (\ref{mass}) (\ref{massbar})
with $f,\bar{f}$ satisfying assumptions (a1-3) on regions \ $\Omega,\bar
{\Omega}$ \ whose defining functions also satisfy (a1-3).\ There exists an
$\epsilon_{0}(n)$ such that if the cost function satisfies standard
assumptions (A1) and (A2) and (c-a1,a2) and (cm-a3) hold, then the optimal map
transporting $m$ to $\bar{m}$ is regular. \ \ 
\end{theorem}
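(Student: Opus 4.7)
The plan is to establish the result via the continuity method, deforming the cost $c$ to the Euclidean cost $c_0(x,\bar x)=|x-\bar x|^2/2$ along the segment $c_\tau = (1-\tau)c_0 + \tau c$ (and similarly deforming $f, \bar f, h, \bar h$ to convex quadratics if needed to start the method). Since at $\tau=0$ one is solving the standard Brenier problem between two log-concave densities on convex domains, classical results (Brenier existence, Caffarelli's regularity) give a smooth starting solution. The game is then to produce $\tau$-independent a priori $C^{2,\alpha}$ estimates for solutions of (\ref{OT}) under (a1-3), (c-a1,a2), (cm-a3) and (\ref{beta}); once these are in hand, Evans-Krylov and Schauder upgrade them to $C^{k,\alpha}$, and openness plus closedness of the set of $\tau$ for which a smooth solution exists closes the continuity method. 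Before running the a priori arguments one records that (c-a1) makes $c$ uniformly non-degenerate and, together with (a2) and the convexity of $h,\bar h$, implies $u$ is essentially convex, so that the distinction between convexity and $c$-convexity is cosmetic; this is what the promised Lemma \ref{c convexity lemma} will cover. The $C^0$ and $C^1$ bounds are then immediate from the uniform boundedness of $\Omega,\bar\Omega$ and the fact that $Du(x)=c_i(x,T(x))$ with $x,T(x)$ ranging over bounded sets.

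The heart of the proof, and the main obstacle, is the interior $C^2$ estimate. The natural test quantity is something of the form
\[
v(x,\xi) \;=\; \log w_{\xi\xi}(x) \;+\; A\,|Du|^2 \;-\; B\,u(x),
\]
maximized over unit $\xi$. Differentiating $\log\det w_{ij} = -f(x) + \bar f(T(x)) + \log|\det c_{is}|$ twice in direction $\xi$ produces the usual identity
\[
w^{ij}\bigl(\log w_{\xi\xi}\bigr)_{ij}
\;\geq\; f_{\xi\xi} \;-\; \bar f_{pq}\,T^p_\xi T^q_\xi \;-\; (\text{MTW term for } c) \;-\; (\text{errors in } D^3c,\,D^4c),
\]
plus cubic terms that the Cauchy-Schwarz trick absorbs. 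Here (a2) gives $f_{\xi\xi}\geq \delta>0$, while the barred convexity of $\bar f$ hurts us and must be controlled using the uniform bound on $|Du|$ and (cm-a3b); the MTW expression for the near-Euclidean cost vanishes for $c=c_0$ and is of size at most $C(n)\epsilon_0$ under (c-a1,a2); and (cm-a3) gives two-sided bounds on $\det w_{ij}$, hence on the trace of $w^{ij}$ since $\det w_{ij}\asymp 1$ and the eigenvalues of $w_{ij}$ are controlled from below by (c-a1). Choosing $\epsilon_0$ small so that the cost-error and MTW contributions are dominated by the gain $\delta/2$ from $f_{\xi\xi}$, and tuning $A,B$ against the $|Du|^2$ and $u$ terms, forces any interior maximum of $v$ to obey $w_{\xi\xi}\leq C(n,\delta,\Lambda)$.

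For the boundary $C^2$ estimate I would follow the standard Caffarelli-Urbas-Trudinger-Wang scheme: at $x_0\in\p\Omega$, use the defining function $h$, which by (a2) and (\ref{beta}) has uniformly convex level set $\p\Omega$ of second fundamental form at most $4$, together with the $c$-convexity of $\bar\Omega$ (inherited from the convexity of $\bar h$ via (c-a1)), to construct barrier functions estimating mixed tangential-normal and pure normal second derivatives of $u$ on $\p\Omega$; the tangential-tangential case is immediate from the boundary condition $T(\p\Omega)=\p\bar\Omega$ together with the principal curvature bound. The near-Euclidean assumption (c-a1) makes the barriers essentially those of the classical Brenier problem, so the perturbation terms enter linearly in $\epsilon_0$ and are harmless. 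Combining the interior and boundary estimates gives a uniform $C^2$ bound, hence (by (c-a1)) uniform ellipticity of the linearized equation, and standard Evans-Krylov and Schauder then promote this to $C^{k,\alpha}$ for every $k$. The main obstacle throughout is the quantitative interior inequality: one must make precise that the positivity $D^2 f\geq\delta$ really does dominate both the uncontrolled sign of the MTW term of the perturbed cost and the barred convexity of $\bar f$ after the change of variable by $T$, and this is where the explicit constants in (c-a1,a2) and (cm-a3b) are used.
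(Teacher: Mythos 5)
Your skeleton (continuity method from the Euclidean cost, interior and boundary second derivative estimates, obliqueness, then $C^{2,\alpha}$ and Schauder) matches the paper, but there are two genuine errors at the heart of the interior estimate. First, you have the source of the coercivity backwards. In the maximum principle computation (the paper's Lemma \ref{Lw}), the term coming from $f$ is $-f_{11}$, which is merely a bounded quantity ($|D^{2}f|\leq 2$); it produces no quadratic gain. The good term is $\bar{f}_{st}T_{1}^{s}T_{1}^{t}\geq\delta|T_{1}|^{2}\sim\delta W^{2}/C_{2}$, i.e.\ it is the log-concavity of the \emph{target} density, transplanted through the map $T$, that supplies the $\delta W^{2}$ coercivity (this is Caffarelli's observation from \cite{C2}, and the paper's heuristic section states explicitly that this is the essential fact). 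You instead assert that $f_{\xi\xi}\geq\delta$ is the gain and that ``the barred convexity of $\bar f$ hurts us and must be controlled'' --- that is the opposite of the actual mechanism, and with that reading the estimate does not close: a bounded term $f_{\xi\xi}$ cannot dominate error terms that grow with $W$.

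Second, your claim that choosing $\epsilon_{0}$ small ``forces any interior maximum to obey $w_{\xi\xi}\leq C$'' is false as stated, and the missing idea is precisely the paper's connectedness argument. The error terms involving $D^{3}c$, $D^{4}c$ are contracted against $w^{ij}$ and against products $T_{1}^{s}T_{1}^{t}$, so they scale like $\epsilon_{0}\bar{W}W^{2}\lesssim\epsilon_{0}W^{n+1}$ (using $\bar{W}\leq\frac{1}{n^{n-2}}\Lambda W^{n-1}$, which is all that (cm-a3) gives you --- the eigenvalues of $w$ are \emph{not} individually bounded below a priori). The resulting inequality $\delta M^{2}-\epsilon_{0}M^{n+1}\leq C$ (the paper's (\ref{split1}), (\ref{heu})) does not bound $M$ for any fixed $\epsilon_{0}>0$: its solution set is the union of a compact component and a noncompact one. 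The bound is obtained only by noting that at $t=0$ (Euclidean cost, $C_{3}=C_{4}=0$) the inequality is unconditionally coercive, and that $M(t)$ varies continuously along the deformation, so it cannot jump to the noncompact component provided $\epsilon_{0}$ is small enough that the components of (\ref{b1})--(\ref{b3}) stay disjoint. Without this step your interior argument (and likewise the boundary argument, which yields the same dichotomy and again draws its $\delta\sigma M^{2}$ term from $\bar{h}_{st}T_{1}^{s}T_{1}^{t}$, not from a curvature bound on $\partial\Omega$) does not produce an a priori bound.
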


\begin{remark}
These conditions are nonvacuous. \ For example take $f,h,\bar{f},\bar{h}$ all
to be
\[
\frac{2}{3}|x|^{2}-\frac{1}{4},
\]
and
\[
c(x,\bar{x})=-x\cdot\bar{x}.
\]
One can check that all the assumptions are satisfied with plenty of room to
perturb any of the problems components. 
\end{remark}

The following theorem will follow by a change of coordinates and rescaling. \ 

\begin{theorem}
\ Let $x_{0},\bar{x}_{0}$ be two points in manifolds $X,\bar{X}$ such that
near $(x_{0},\bar{x}_{0})$ the cost function is smooth and satisfies standard
nondegeneracy conditions (A1)(A2). Then there exists a $\lambda$ large
depending on the cost function, so that the optimal map from the Gaussian
(after a choice of coordinates)\
\[
e^{-\lambda^{2}|x-x_{0}|^{2}/2}\chi_{B_{1/\lambda}(x_{0})}%
\]
to
\[
e^{-\lambda^{2}|\bar{x}-\bar{x}_{0}|^{2}/2}\chi_{B_{1/\lambda}(\bar{x}_{0})}%
\]
is smooth. \ 
\end{theorem}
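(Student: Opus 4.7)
The plan is to reduce Theorem 1.2 to Theorem 1.1 by a localization-and-zoom argument. First I choose smooth coordinate charts on $X$ and $\bar X$ centered at $x_0$ and $\bar x_0$, identifying neighborhoods of these points with neighborhoods of the origin in $\mathbb{R}^n$. Using the nondegeneracy (A2) of $c_{i\bar s}(x_0,\bar x_0)$, after a linear change of variables on each side I may assume $c_{i\bar s}(0,0)=-\delta_{is}$. In particular $c$ agrees with $-x\cdot \bar x$ to second order at $(0,0)$, with a smooth remainder whose higher derivatives are bounded uniformly on some fixed small bi-disc.

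For $\lambda$ large, dilate by setting $y=\lambda x$, $\bar y = \lambda\bar x$, and define the rescaled cost
\[
\tilde c(y,\bar y)=\lambda^{2}\,c(y/\lambda,\bar y/\lambda).
\]
A direct computation shows that minimizing $\int c(x,T(x))\,dm$ subject to $T_\# m=\bar m$ is, up to an overall positive constant, equivalent to minimizing $\int \tilde c(y,\tilde T(y))\,d\tilde m$ with $\tilde T_\# \tilde m = \tilde{\bar m}$, where $\tilde m$ and $\tilde{\bar m}$ are the pushforwards of the given Gaussians under $y=\lambda x$. Under this pushforward the densities become, up to an irrelevant constant factor $\lambda^{-n}$, the standard Gaussians $e^{-|y|^{2}/2}\chi_{B_{1}}(y)$ and $e^{-|\bar y|^{2}/2}\chi_{B_{1}}(\bar y)$. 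The density exponent $f(y)=|y|^{2}/2$ satisfies (a1-a3) on $B_1$ with $\delta=1$, and the function $h(y)=(|y|^{2}-1)/2$ is a defining function for $B_{1}$ satisfying (a1-a3) and $|Dh|=1\geq 1/2$ on $\partial B_{1}$; the same holds on the barred side.

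The key point is the scaling $D^{k}\tilde c(y,\bar y)=\lambda^{2-k}D^{k}c(y/\lambda,\bar y/\lambda)$. Thus $\tilde c_{i\bar s}(y,\bar y)=c_{i\bar s}(y/\lambda,\bar y/\lambda)$ converges uniformly on $B_{1}\times B_{1}$ to $-\delta_{is}$ as $\lambda\to\infty$, which gives (c-a1) with any prescribed $\epsilon_{0}$ for $\lambda$ large enough; for $k\geq 3$, $\|D^{k}\tilde c\|_{L^\infty}=O(\lambda^{2-k})\to 0$, which gives (c-a2); and since both the Gaussian densities and $|\det\tilde c_{i\bar s}|$ lie within bounded factors of their values at the origin on $B_1\times B_1$, after adjusting $f$ by an additive constant to equalize total mass one can make the ratio in (cm-a3) arbitrarily close to $1$, so (cm-a3b) holds easily. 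Theorem 1.1 then applies to the rescaled problem and produces a smooth optimal map $\tilde T$, from which the original smooth optimal map is recovered as $T(x)=\tilde T(\lambda x)/\lambda$. The main (minor) obstacle is bookkeeping: verifying that the $\lambda^{2}$ in $\tilde c$ and the $\lambda^{-n}$ in the pushforward densities really do wash out as harmless constants, so that the zoomed problem falls inside Theorem 1.1 verbatim; the substantive smallness statement is entirely contained in the single scaling identity above.
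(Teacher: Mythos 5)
Your proposal is correct and follows essentially the same route as the paper: choose coordinates with $c_{is}(x_0,\bar x_0)=-I$, rescale the cost as $\lambda^{2}c(y/\lambda,\bar y/\lambda)$ so that the scaling $D^{k}\tilde c=\lambda^{2-k}D^{k}c$ forces (c-a1), (c-a2) and (cm-a3) for large $\lambda$, check (a1-3) for the standard Gaussian and the defining function of $B_1$, and invoke Theorem 1.1. The only difference is that you spell out the verification of the hypotheses in somewhat more detail than the paper does.
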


\begin{remark}
We do not attempt to obtain any sharp results, rather the convenient smallness
assumptions are to minimize crunchiness of the proof.\ Inspection of the proof
will show that our choice of assumptions are robust. There is a rather large gap between what is covered here and the counterexamples, and we have no reason to suspect that these results are near sharp.
\end{remark}

\begin{remark}
We would like to obtain a similar result for complete Gaussians, as Caffarelli
obtained in the Euclidean case in \cite{C2}. \ In fact, it was an attempt to
generalize the calculation in \cite{C2} that led to this result. \ A
limitation of our current method is that we cannot force (cm-a3) to hold on
large regions. \ \ 
\end{remark}

\subsection{Proof Heuristic}

We will solve the problem by continuity, starting with Euclidean cost,
obtaining second derivative estimates using the approach of Urbas \cite{U} and
Trudinger and Wang \cite{TW}, making use of the Ma, Trudinger and Wang \cite{MTW}
calculation together with the calculation of Caffarelli \cite{C2}. \ Making
these methods work in the absense of the MTW condition, we use the following
observation:\ The bound $M$ on the second derivatives will satisfy the
following type of inequality
\begin{equation}
\delta M^{2}-tM^{n+1}-1\leq0. \label{heu}%
\end{equation}
When $t$ is zero, this bounds $M,$ so $M$ is initially bounded. If $t$ is
small it follows that $M(t)$ lies either on a relatively small compact
interval containing $[-1/2,1/2]$ or on a noncompact interval. \ The bound $M(t)$
is changing continuously with $t,$ thus the interval it lies in must not
change, thus from the initial bound we may conclude that for all $t$ in some
interval of fixed size, $M(t)$ is bounded.

The quadratic coefficient $\delta$ in (\ref{heu}) (same $\delta$ as in (a2)) arises
when the target distribution is log-concave, as is the case with Gaussians.
\ This fact is essential to the proof.

\section{Calculations}

Recall the symmetric tensor $w$ (\ref{this is w}).\ \ We use the quantities
defined as follows%
\begin{align*}
W(x)  &  =\sum w_{ii}\sim\max_{i}w_{ij}\sim\left\vert T_{j}^{s}\right\vert \\
\bar{W}(x)  &  =\sum w^{ii}\sim1/\min_{i}w_{ii}\\
C_{3}  &  \geq\left\Vert D^{3}c\right\Vert C(n)\\
C_{4}  &  \geq\left\Vert D^{4}c\right\Vert C(n)\\
\frac{1}{C_{2}}\left\vert \xi\right\vert ^{2}  &  \leq-c^{si}\xi_{i}\xi
_{s}\leq C_{2}\left\vert \xi\right\vert ^{2}%
\end{align*}

From (cm-a3) \ and Newton-McLaurin inequalities, it follows that
\begin{align}
\bar{W},W &  \geq n\frac{1}{\Lambda^{1/n}}\label{2.1}\\
\bar{W} &  \leq\frac{1}{n^{n-2}}\Lambda W^{n-1}\label{2.2}\\
W &  \leq\frac{1}{n^{n-2}}\Lambda\bar{W}^{n-1}\label{2.3}%
\end{align}
and pluggin in (cm-a3b)
\begin{equation}
W,\bar{W}\geq3/2.
\end{equation}
Notice that (a1)(a2)(ca-1)(ca-2) imply the following inequality for any vector
in $%
\mathbb{R}
^{n}$
\begin{equation}
\left(  \bar{h}_{st}-c^{kp}c_{kst}\bar{h}_{p}\right)  \xi_{s}\xi_{t}\geq
\frac{9}{10}|\xi|^{2}.\label{cconvex}%
\end{equation}

Throughout this section we will be assuming we have a smooth solution $u$ to
the equation (\ref{OT}) on $\Omega.$ \ Our goal is to prove second derivative estimates.

We make use of the linearized operator at a solution $u,$ from \cite{TW}
defined by%
\[
Lv=w^{ij}v_{ij}-\left(  w^{ij}c_{ij,s}c^{sk}+\bar{f}_{s}(T(x,Du))c^{sk}%
+c^{is}c_{si,p}c^{pk}\right)  v_{k}.
\]

The following has an immediate consequence when maximums occur on the
interior, and is also crucial in the boundary estimates in Section 4. \ \ The
proof is a moderately long calculation and follows by the arguments in
\cite{MTW} .

\begin{lemma}
\label{Lw} \ Suppose $u\left(  x\right)  $ is a solution to (\ref{OT}). Then
\[
Lw_{11}=
\]%
\begin{align*}
&  =w^{ij}\left[  2c_{ijs1}T_{1}^{s}+c_{ijst}T_{1}^{s}T_{1}^{t}-2c_{11is}%
T_{j}^{s}-c_{11st}T_{i}^{s}T_{j}^{t}\right] \\
&  -c_{11p}c^{kp}[-f_{k}+\bar{f}_{s}T_{k}^{s}+c^{is}c_{ist}T_{k}^{t}%
-c_{ijs}w^{ij}T_{k}^{s}-c_{skj}c^{sj}-c^{ti}c^{sj}c_{kst}w_{ij}]\\
&  +\bar{f}_{st}T_{1}^{s}T_{1}^{t}-f_{11}+c^{is}(c_{is11}+2c_{ist1}T_{1}%
^{t}+c_{istp}T_{1}^{t}T_{1}^{p})\\
&  +(c_{1}^{is}+c_{t}^{is}T_{1}^{t})(c_{is1}+c_{isp}T_{1}^{p})\\
&  +\left(  w^{ij}c_{ijp}+\bar{f}_{p}+c^{ij}c_{isp}\right)  c^{pk}\left(
c_{11s}T_{k}^{s}-c_{k1,s}T_{1}^{s}-c_{ks,1}T_{1}^{s}-c_{kst}T_{1}^{s}T_{1}%
^{t}\right) \\
&  -w_{1}^{ij}w_{ij1}.
\end{align*}

\end{lemma}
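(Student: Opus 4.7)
The plan is to differentiate the logarithm of the transport equation (\ref{OT}) twice in the $e_1$-direction and then reorganize the resulting identity to expose the linearized operator $L$ acting on $w_{11}$. Taking logs turns (\ref{OT}) into
$$\log \det w_{ij}(x) = -f(x) + \bar{f}(T(x,Du)) + \log|\det c_{is}(x,T(x,Du))|,$$
and the bridge between derivatives of $T$ and derivatives of $w$ is the implicit relation $u_i(x) = c_i(x,T(x))$, which on differentiation gives $w_{ij} = c_{is}T^{s}_{j}$ and hence $T^{s}_{j} = c^{si}w_{ij}$. Differentiating this relation once more lets one solve for $T^{s}_{ij}$ and $T^{s}_{11}$ in terms of first derivatives of $w$ and third derivatives of $c$; this is the key algebraic fact used to eliminate second derivatives of $T$ wherever they appear.

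As a warm-up I would carry out a one-derivative version: differentiating in $x_k$ and applying $\partial_k \log \det w_{ij} = w^{ij}w_{ij,k}$ on the left. The drift coefficients multiplying $u_k$ that arise in this identity are precisely those that appear in the definition of $L$; this is what motivates the shape of $L$ and is exactly the content of the bracketed expression multiplying $-c_{11p}c^{kp}$ in the claim. Next I apply the second-derivative log-det formula
$$\partial_1\partial_1 \log \det w_{ij} = w^{ij} w_{ij,11} - w^{ia}w^{jb} w_{ij,1} w_{ab,1},$$
and expand $w_{ij,11}$ using $w_{ij} = u_{ij} - c_{ij}(x,T(x))$ and the chain rule. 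Using $u_{ij,11} = u_{11,ij}$, writing $u_{11} = w_{11} + c_{11}(x,T(x))$, and observing $c_{11ij}=c_{ij11}$, I can replace $w^{ij}w_{ij,11}$ by $w^{ij}w_{11,ij}$ plus explicit lower-order chain-rule terms; these produce the $2c_{ijs1}T^{s}_{1}$, $c_{ijst}T^{s}_{1}T^{t}_{1}$, $-2c_{11is}T^{s}_{j}$, $-c_{11st}T^{s}_{i}T^{t}_{j}$ terms in the first bracket. A parallel chain-rule expansion of $\partial_1^2\bar f(T)$ and $\partial_1^2\log|\det c_{is}|$ yields the $\bar f_{st}T^{s}_{1}T^{t}_{1}$, $c^{is}(c_{is11}+2c_{ist1}T^{t}_{1}+c_{istp}T^{t}_{1}T^{p}_{1})$, and $(c_{1}^{is}+c_{t}^{is}T^{t}_{1})(c_{is1}+c_{isp}T^{p}_{1})$ terms, while the spurious $T^{s}_{11}$ is eliminated by using the implicit relation; this elimination is precisely what generates the outer factor $-c_{11p}c^{pk}$ paired with the one-derivative identity. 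The quadratic $-w^{ij}_{1}w_{ij,1}$ is nothing but $-w^{ia}w^{jb}w_{ij,1}w_{ab,1}$, rewritten using the inverse-matrix derivative $w^{ij}_{1} = -w^{ia}w^{jb}w_{ab,1}$.

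The only obstacle is combinatorial bookkeeping: four chain-rule expansions must be combined, several three-index contractions involving $c^{is}$, $c_{ijk}$, and $T^{s}_{j}$ must be repeatedly rewritten using $T^{s}_{j} = c^{si}w_{ij}$, and every term must be tracked so that nothing is silently dropped. Unlike the calculation in \cite{MTW}, where the MTW condition lets one discard various lower-order pieces, here every term must be retained because the eventual estimate (heuristic (\ref{heu})) will depend on the sign and size of each contribution. No new ideas are required: the lemma is simply the outcome of executing the MTW differentiation in full generality, and the stated identity is obtained by moving $w^{ij}w_{11,ij}$ and the gradient term $(w^{ij}c_{ij,s} + \bar f_s + c^{ij}c_{isp})c^{sk}(w_{11})_k$ to the left, leaving the tabulated remainder on the right.
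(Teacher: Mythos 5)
Your proposal is correct and is essentially the paper's own argument: the paper gives no details for this lemma, saying only that it ``follows by the arguments in [MTW]'', and your outline (log-differentiate (\ref{OT}) twice in $e_1$, use $T^s_j=c^{si}w_{ij}$ and its derivative to eliminate second derivatives of $T$, swap $w^{ij}w_{ij,11}$ for $w^{ij}w_{11,ij}$ via $u_{ij11}=u_{11ij}$, and identify $-w^{ia}w^{jb}w_{ij,1}w_{ab,1}$ with $-w^{ij}_1 w_{ij1}$) is precisely that computation with every term retained. One small imprecision: the bracket multiplying $-c_{11p}c^{kp}$ arises from the term $w^{ij}c_{11s}T^s_{ij}$ (evaluated via the once-differentiated equation), which is distinct from the drift coefficients of $L$ and from the elimination of $T^s_{11}$ that produces the $\left(w^{ij}c_{ijp}+\bar f_p+c^{ij}c_{isp}\right)c^{pk}(\cdots)$ line; your write-up conflates these slightly, but the bookkeeping you describe would sort this out.
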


Applying the maximum principle,

\begin{corollary}
If the largest eigenvalue $W$ of $w$ is attained on the interior, it must
satisfy
\begin{equation}
\frac{\bar{\delta}}{C_{2}}W^{2}-\left(  C_{4}+C_{3}+C_{3}|Df|\right)
W^{n+1}-|D^{2}f|-C(C_{3},C_{4})\leq0. \label{split1}%
\end{equation}

\end{corollary}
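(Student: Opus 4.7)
The approach is to apply the maximum principle directly to the identity for $L w_{11}$ in Lemma \ref{Lw}. At an interior point $x_{0}$ where the largest eigenvalue $W$ is attained, I would rotate coordinates so that $w$ is diagonalized there with $w_{11}(x_{0}) = W(x_{0})$. Since $w_{11}(x) \leq W(x)$ everywhere with equality at $x_{0}$, the scalar $w_{11}$ has an interior local maximum at $x_{0}$ in these fixed coordinates; hence $Dw_{11}(x_{0}) = 0$ and $D^{2} w_{11}(x_{0}) \leq 0$, and the positive definiteness of $w^{ij}$ yields $L w_{11}(x_{0}) \leq 0$.

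Next, I would discard the term $-w_{1}^{ij} w_{ij1} = w^{ik} w^{jl} w_{ij,1} w_{kl,1} \geq 0$ from the right-hand side of Lemma \ref{Lw}, which only strengthens the inequality. The positive quadratic-in-$W$ contribution to isolate is $\bar{f}_{st} T_{1}^{s} T_{1}^{t}$: in the diagonalizing frame $w_{i1} = W \delta_{i1}$, so $T_{1}^{s} = c^{is} w_{i1} = c^{1s} W$, and combining $D^{2} \bar{f} \geq \bar{\delta} I$ with the definition of $C_{2}$ gives $\bar{f}_{st} T_{1}^{s} T_{1}^{t} \geq (\bar{\delta}/C_{2}) W^{2}$. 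The term $-f_{11}$ contributes at worst $-|D^{2} f|$.

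Every other term in Lemma \ref{Lw} I would bound in absolute value using the uniform hypotheses: $|D^{3}c| \leq C_{3}$, $|D^{4}c| \leq C_{4}$, $|Df|, |\bar{f}_{s}| \leq 1$, $|T_{j}^{s}| \leq C_{2} W$, and $|w^{ij}| \leq \bar{W}$. Monomials carrying a factor of $\bar{W}$ against two factors of $T$ produce $\bar{W} W^{2} \leq C \Lambda W^{n+1}$ via the Newton--Maclaurin bound \eqref{2.2}; these are the sources of $C_{4} W^{n+1}$ (from $w^{ij} c_{ijst} T_{1}^{s} T_{1}^{t}$), of $C_{3} W^{n+1}$ (from the $c_{11p} c^{kp}$ bracket and the final line of the identity, where $C_{3}^{2} \leq \epsilon_{0} C_{3}$ consolidates the coefficient), and of $C_{3} |Df| W^{n+1}$ (from the cross term pairing $c_{11p} c^{kp} f_{k}$ against the $T \cdot T$ pieces). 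All remaining contributions are polynomial in $W$ of strictly lower degree with coefficients depending only on $C_{3}, C_{4}$, and by Young's inequality they can be absorbed into the $W^{n+1}$ piece and the constant $C(C_{3}, C_{4})$. Combining yields the stated inequality; the only real obstacle is the bookkeeping across the many monomials of Lemma \ref{Lw}, with no new analytic idea required beyond (i) reading $\bar{\delta} |T_{1}|^{2}$ as a positive multiple of $W^{2}$ using the Euclidean closeness of $c$, and (ii) converting trace bounds on $w^{ij}$ into $W^{n+1}$ growth via \eqref{2.2}.
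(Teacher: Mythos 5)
Your proposal is correct and follows exactly the route the paper intends: the paper's entire justification for this corollary is the phrase ``Applying the maximum principle'' to Lemma \ref{Lw}, and you have supplied the same argument in more detail — interior maximum of $w_{11}$ in a diagonalizing frame gives $Lw_{11}\leq 0$, the good term $\bar f_{st}T_1^sT_1^t\geq(\bar\delta/C_2)W^2$ comes from log-concavity of the target, the sign-favorable term $-w_1^{ij}w_{ij1}\geq 0$ is discarded, and everything else is absorbed into $(C_4+C_3+C_3|Df|)W^{n+1}+|D^2f|+C(C_3,C_4)$ via $\bar W\lesssim \Lambda W^{n-1}$. No discrepancy worth noting.
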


\bigskip

The next computation is implicit throughout \cite{TW} sections 2,3 and 4.
\ \ We state it for concreteness. \ 

\begin{lemma}
\label{Lv} Let $v(x)=F(x,T(x,Du)).$ \ Then
\begin{align}
Lv &  =w^{ij}F_{ij}+2F_{is}c^{is}+F_{st}c^{is}c^{jt}w_{ij}\label{eq:Lv}\\
&  +F_{p}\left(  -c^{pk}f_{k}-c^{pk}c_{ks,j}c^{js}-c_{kst}c^{pk}c^{is}%
c^{jt}w_{ij}\right)  \nonumber\\
&  -F_{k}\left(  w^{ij}c_{ijs}c^{sk}+\bar{f}_{s}c^{sk}+c^{is}c_{sit}%
c^{tk}\right) . \nonumber
\end{align}

\end{lemma}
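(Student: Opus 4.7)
The plan is a direct chain-rule computation.  Writing $v(x)=F(x,T(x,Du))$, where $T=T(x,Du)$ is determined implicitly by $u_i(x)=c_i(x,T(x,Du))$, I would begin by differentiating $v$ in $x$:
\begin{align*}
v_k &= F_k + F_p T_k^p,\\
v_{ij} &= F_{ij} + F_{ip}T_j^p + F_{jp}T_i^p + F_{pq}T_i^p T_j^q + F_p T_{ij}^p.
\end{align*}
Next I would express the derivatives of $T$ in terms of $w$.  Differentiating $u_i=c_i(x,T)$ once gives $w_{ij}=c_{is}T_j^s$, hence $T_j^s=c^{si}w_{ij}$; differentiating again (and substituting $u_{ijk}=w_{ijk}+c_{ijk}+c_{ijp}T_k^p$) yields an expression of the shape
\[
T_{ij}^p = c^{pl}\bigl(w_{lij}-c_{lsj}T_i^s - c_{lst}T_j^t T_i^s\bigr).
\]

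Then I would substitute everything into
\[
Lv = w^{ij}v_{ij} - \bigl(w^{ij}c_{ij,s}c^{sk}+\bar f_s c^{sk}+c^{is}c_{si,p}c^{pk}\bigr)v_k.
\]
The $F_{ij}$ and $F_k$ pieces reproduce the first and last lines of (\ref{eq:Lv}) immediately.  The $F_{ip}$ and $F_{pq}$ terms simplify through the two contraction identities
\[
w^{ij}T_j^p = c^{pi},\qquad w^{ij}T_i^p T_j^q = c^{pk}c^{ql}w_{lk},
\]
both of which follow from $w^{ij}w_{kj}=\delta_k^i$, and these produce the terms $2F_{is}c^{is}+F_{st}c^{is}c^{jt}w_{ij}$.

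The delicate line to verify is the coefficient of $F_p$, which collects $w^{ij}T_{ij}^p$ from $w^{ij}v_{ij}$ together with the cross contribution $-A^k F_p T_k^p=-A^k F_p c^{pl}w_{lk}$ from the first-derivative part of $L$.  The scalar $w^{ij}w_{ijl}$ appearing in $w^{ij}T_{ij}^p$ is not immediately geometric; I would evaluate it by taking a logarithmic derivative of the Monge--Amp\`ere equation (\ref{OT}), which gives
\[
w^{ij}w_{ijl} = -f_l+\bar f_s T_l^s + c^{si}(c_{isl}+c_{ist}T_l^t).
\]
Substituting, then using $T_l^s=c^{si}w_{il}$ to rewrite the remaining cubic-in-$c$ contractions, and combining with the $-A^k c^{pl}w_{lk}$ contribution, one finds that the $\bar f_s T_k^s$, pure-$x$ determinant and $w^{ij}c_{ijs}$ terms cancel against the corresponding entries of $A^k w_{lk}c^{pl}$, leaving exactly
\[
F_p\bigl(-c^{pk}f_k - c^{pk}c_{ks,j}c^{js} - c_{kst}c^{pk}c^{is}c^{jt}w_{ij}\bigr).
\]

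The only real obstacle is bookkeeping: tracking barred versus unbarred indices through two chain rules, and recognizing which of the many third-derivative-of-$c$ contractions must be rewritten using $T_k^s=c^{si}w_{ik}$ before the cancellations against $A^k$ become visible.  There is no subtle analytic input beyond smoothness of $u$, which is being assumed throughout this section.
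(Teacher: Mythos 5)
Your overall strategy is the right one, and it is essentially the computation the paper itself declines to write out (the lemma is stated with only a pointer to [TW]); the two chain-rule expansions, the contraction identities $w^{ij}T_j^p=c^{pi}$ and $w^{ij}T_i^pT_j^q=c^{pk}c^{ql}w_{lk}$, and the use of the logarithmic derivative of (\ref{OT}) are all correct and do produce the first and third lines of (\ref{eq:Lv}) exactly as you say.

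There is, however, one concrete step where your bookkeeping as written would not close. The scalar produced by $w^{ij}T_{ij}^p$ is $w^{ij}w_{lij}=w^{ij}\partial_j w_{li}$ (the contracted index $j$ is the differentiation index), whereas the logarithmic derivative of (\ref{OT}) computes $w^{ij}w_{ijl}=w^{ij}\partial_l w_{ij}$. These are not equal, because $\partial_k w_{ij}=u_{ijk}-c_{ijk}-c_{ijs}T_k^s$ fails to be symmetric in its three indices precisely through the term $c_{ijs}T_k^s$. One has
\[
w^{ij}\partial_j w_{li}=w^{ij}\partial_l w_{ij}+w^{ij}c_{ijs}T_l^s-c_{lis}c^{si},
\]
and these two correction terms are exactly what make your claimed cancellations happen: $c^{pl}\,w^{ij}c_{ijs}T_l^s$ cancels the entry $w^{ij}c_{ijs}c^{sk}T_k^p$ of $A^kT_k^p$ (using the symmetry of $w$ to see $c^{pl}T_l^s=c^{sk}T_k^p$), and $-c^{pl}c_{lis}c^{si}$ cancels the pure-$x$ part $c^{pl}c^{si}c_{isl}$ of the $\ln|\det c_{is}|$ derivative. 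If instead you identify $w^{ij}w_{lij}$ directly with the logarithmic derivative, as your outline suggests, the term $-w^{ij}c_{ijs}c^{sk}T_k^pF_p$ and the term $+c^{pl}c^{si}c_{isl}F_p$ survive with nothing to absorb them, and you do not land on the stated coefficient of $F_p$. So the cancellation you assert is real, but its mechanism is the non-symmetry of $w_{ijk}$, which must be made explicit; once it is, the remaining terms $-c^{pk}f_k-c^{pk}c_{ks,j}c^{js}-c_{kst}c^{pk}c^{is}c^{jt}w_{ij}$ come out exactly as in the lemma.
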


\begin{corollary}
\label{Lh} Given conditions (c-a1) (c-a2) and (a1) (a2)\ on the functions
$f,\bar{f},$ $h,$ and $\bar{h},$ we have
\[
Lh\geq\frac{9}{10}\delta\bar{W}-\frac{11}{10}%
\]%
\[
L\bar{h}(T(x,Du))\geq\frac{9}{10}\delta W-\frac{11}{10}.
\]

\end{corollary}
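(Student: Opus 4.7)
The plan is to apply Lemma \ref{Lv} to the two cases $F(x,\bar{x})=h(x)$ and $F(x,\bar{x})=\bar{h}(\bar{x})$, and estimate each resulting piece using the smallness assumptions on the cost together with (a1)--(a3) for $h,\bar h, f,\bar f$. In each case, the fact that $F$ depends on only one of the variable sets kills several of the five terms on the right-hand side of \eqref{eq:Lv}.

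For $F(x,\bar x)=h(x)$, only $F_{ij}=h_{ij}$ and $F_k=h_k$ survive, giving
\[
Lh = w^{ij}h_{ij} - h_k\bigl(w^{ij}c_{ijs}c^{sk}+\bar f_s c^{sk}+c^{is}c_{sit}c^{tk}\bigr).
\]
Since (a2) applied to $h$ yields $h_{ij}\geq\delta I$ and $w^{ij}$ is positive definite with trace $\bar W$, the principal term obeys $w^{ij}h_{ij}\geq\delta\bar W$. The only correction that grows with $\bar W$ is $-h_kw^{ij}c_{ijs}c^{sk}$; using $\sum_{i,j}|w^{ij}|\leq n\bar W$ for a positive definite matrix, together with (a1), (c-a1), and (c-a2), one bounds this term by a small dimensional constant times $\epsilon_0\bar W$. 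The remaining two corrections do not involve $w$ and are bounded by dimensional constants. Combining these and using $\delta\geq 1$ to absorb the $O(\epsilon_0)\bar W$ term into $\delta\bar W$ yields $Lh\geq (9/10)\delta\bar W - 11/10$.

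For $F(x,\bar x)=\bar h(\bar x)$, only $F_{st}=\bar h_{st}$ and $F_p=\bar h_p$ survive, so
\[
L\bar h = \bar h_{st}c^{is}c^{jt}w_{ij}+\bar h_p\bigl(-c^{pk}f_k - c^{pk}c_{ks,j}c^{js} - c_{kst}c^{pk}c^{is}c^{jt}w_{ij}\bigr).
\]
Write $C$ for the matrix $c^{is}$ and $\bar H$ for the Hessian of $\bar h$; the principal term equals $\mathrm{tr}(\bar H\,C^T w C)$. Condition (c-a1) gives $CC^T\geq(1-O(\epsilon_0))I$ and (a2) gives $\bar H\geq\delta I$, so $\bar h_{st}c^{is}c^{jt}w_{ij}\geq(1-O(\epsilon_0))\delta W$. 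The $w$-dependent correction $-\bar h_pc_{kst}c^{pk}c^{is}c^{jt}w_{ij}$ is at most $O(\epsilon_0)\cdot W$ by (c-a2) and (a1), while the two remaining pieces $-\bar h_pc^{pk}f_k$ and $-\bar h_pc^{pk}c_{ks,j}c^{js}$ are bounded by dimensional constants using (a1) and (c-a1), (c-a2). Absorbing the $O(\epsilon_0)W$ corrections into $\delta W$ via $\delta\geq 1$ gives $L\bar h\geq(9/10)\delta W - 11/10$. Inequality \eqref{cconvex} packages the combined symmetric form $(\bar h_{st}-c^{kp}c_{kst}\bar h_p)$ cleanly and can be used as an intermediate step.

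The only real obstacle is the bookkeeping needed to produce the concrete constants $9/10$ and $11/10$: one must verify that (c-a1) and (c-a2) with $\epsilon_0\leq 1/20$ dominate both the $O(\epsilon_0)$ distortion of $c^{is}$ from the identity and the $O(\epsilon_0)$ contraction of $D^3c$ against $w^{ij}$ or $w_{ij}$. Since $\delta\geq 1$ by (a2), there is ample margin to absorb all such corrections into the principal terms, so nothing subtle is required beyond care with constants.
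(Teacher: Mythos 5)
Your proposal is correct and follows exactly the route the paper intends: the corollary is an immediate application of Lemma \ref{Lv} with $F=h(x)$ and $F=\bar h(\bar x)$, using $D^2h, D^2\bar h\geq\delta\geq 1$ for the principal terms ($w^{ij}h_{ij}\geq\delta\bar W$ and $\mathrm{tr}(C\bar H C^T w)\geq(1-O(\epsilon_0))\delta W$, the latter packaged by \eqref{cconvex}), and (a1), (c-a1), (c-a2) to absorb the $O(\epsilon_0)\bar W$ (resp.\ $O(\epsilon_0)W$) corrections and bound the remaining terms by $11/10$. The paper states the corollary without proof, and your argument supplies precisely the intended one.
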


\bigskip

\subsection{Obliqueness}

We follow the argument from [TW] section 2. \ Defining
\begin{align*}
\gamma &  =Dh\\
\beta &  =\bar{h}_{s}c^{si}\partial_{i}%
\end{align*}
we let%
\[
\chi=h_{k}\bar{h}_{s}c^{sk}=\gamma\cdot\beta.
\]
From Lemma \ref{Lv} with our assumptions we have%
\[
L\chi\leq\bar{W}(\left\vert D^{3}h\right\vert +C_{3}+C_{4})+W(\left\vert
D^{3}\bar{h}\right\vert +C_{3}+C_{4})+C_{5}(n).
\]
Then Corollary \ref{Lh} gives
\[
L\left\{  \chi-\lambda h-\lambda\bar{h}\circ T(x))\right\}  \leq\bar{W}%
(\frac{11}{10}-\frac{9}{10}\lambda)+W(\frac{11}{10}-\frac{9}{10}%
\lambda)+2\frac{11}{10}\lambda+C_{5}(n)
\]
which is negative for $\lambda$ reasonably chosen. (Throughout we are using
bounds (\ref{2.1}) etc, and our initial assumptions.)\ \ \ This function will
then have a minimum at the boundary, precisely\ at the point where $\chi$
achieves a minimum on the boundary, and at this point we have%

\[
\left\{  D\chi-\lambda D(\bar{h}\circ T)-\lambda Dh\right\}  \cdot\frac
{\gamma}{|\gamma|}\leq0
\]
or
\begin{equation}
D\left\{  \chi-\lambda\bar{h}\circ T\right\}  =\tau\gamma\label{tau}%
\end{equation}
for some $\tau\leq\lambda.$

Now computing (following \cite[2.31-2.33]{TW}), using (\ref{cconvex}) \ and
(\ref{this is w}) with our other assumptions including (\ref{beta}) we conclude

\bigskip%
\begin{align}
D\chi\cdot\beta &  =c^{ti}\bar{h}_{t}\left(  h_{ki}c^{sk}\bar{h}_{s}%
+h_{k}\left(  c_{i}^{sk}+c_{p}^{sk}T_{i}^{p}\right)  \bar{h}_{s}+h_{k}%
c^{sk}\bar{h}_{sp}T_{i}^{p}\right) \nonumber\\
&  =h_{ki}\beta^{k}\beta^{i}+c^{ti}\bar{h}_{t}h_{k}\bar{h}_{s}c_{i}%
^{sk}+c^{ti}\bar{h}_{t}h_{k}T_{i}^{p}(\bar{h}_{sp}c^{sk}-\bar{h}_{s}%
c^{sm}c^{rk}c_{mrp})\nonumber\\
&  =h_{ki}\beta^{k}\beta^{i}+c^{ti}\bar{h}_{t}h_{k}\bar{h}_{s}c_{i}^{sk}%
+h_{k}\bar{h}_{t}T_{a}^{t}c^{pa}c^{rk}(\bar{h}_{rp}-\bar{h}_{s}c^{sm}%
c_{mrp})\label{adf}\\
&  \geq|\beta|^{2}\delta-C_{3}\geq\frac{1}{5}\delta,\nonumber
\end{align}
The third term in (\ref{adf}) can be expressed as an inner product $g$ of the
gradients of the functions $h(x)$ and $\bar{h}\circ T(x),$ which are both
multiples of the outward normal,\ where
\[
g(\xi,\nu)=(\bar{h}_{rp}-\bar{h}_{s}c^{sm}c_{mrp})c^{rk}c^{pa}\xi_{k}\nu_{a}.
\]
Thus
\begin{align*}
\tau\gamma\cdot\beta &  =D\chi\cdot\beta-\lambda D(\bar{h}\circ T)\cdot\beta\\
&  \geq\delta/5-\lambda\bar{h}_{s}T_{i}^{s}c^{it}\bar{h}_{t}\\
&  =\delta/5-\lambda w_{\beta\beta}.
\end{align*}
Thus from $\tau \leq \lambda,$
\begin{equation}
\lambda\chi\geq\delta/5-\lambda w_{\beta\beta}. \label{chi1}%
\end{equation}
Using symmetry (replacing all quantities with barred quantities we find the
problem does not change, again see \cite{TW} and Lemma \ref{c convexity lemma}%
)\ , we may assume
\begin{equation}
\lambda\chi\geq\delta/5-\lambda\bar{w}_{\gamma\gamma}. \label{chi2}%
\end{equation}
Then, using the Urbas formula \cite{U},\ \cite[2.13]{TW}
\[
(\beta\cdot\gamma)^{2}=w^{ij}\gamma_{i}\gamma_{j}w_{\beta\beta}%
\]
or
\begin{equation}
\chi^{2}=\bar{w}_{\gamma\gamma}w_{\beta\beta} \label{chi3}%
\end{equation}
we have combining (\ref{chi1}) (\ref{chi2}) and (\ref{chi3})%
\begin{equation}
\chi\geq\frac{\delta}{10\lambda}=\theta. \label{theta}%
\end{equation}

\begin{corollary}
\label{delta} The following holds, regarding the angle between $\beta$ and
$\gamma$ $\ $%
\[
\angle(\beta,\gamma)\leq\Delta<\pi/2.
\]

\end{corollary}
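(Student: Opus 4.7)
The plan is short and amounts to repackaging the scalar bound $\chi \geq \theta$ from \eqref{theta} as an angular bound, once one notices that the lengths of $\beta$ and $\gamma$ are a priori controlled.

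First I would establish uniform upper bounds on $|\gamma|$ and $|\beta|$. Since $h$ is assumed to satisfy (a1), $|\gamma| = |Dh| \leq 1$. For $\beta^i = c^{si}\bar h_s$, the assumption (c-a1) gives $\|c^{si}\| \leq 1+\epsilon_0$, while (a1) applied to $\bar h$ yields $|D\bar h|\leq 1$, so
\[
|\beta| \;\leq\; (1+\epsilon_0)\,|D\bar h| \;\leq\; B_0
\]
for an explicit constant $B_0$ depending only on $\epsilon_0$ (one may take $B_0 = 21/20$).

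Then by definition of the angle and the identity $\chi = \gamma\cdot\beta$,
\[
\cos\angle(\beta,\gamma) \;=\; \frac{\beta\cdot\gamma}{|\beta|\,|\gamma|} \;=\; \frac{\chi}{|\beta|\,|\gamma|} \;\geq\; \frac{\theta}{B_0},
\]
where in the last step I used the upper bound on the denominator together with \eqref{theta}. Setting $\Delta := \arccos(\theta/B_0) < \pi/2$ gives the claim.

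I do not anticipate any real obstacle: all the genuine work went into proving the quantitative positivity \eqref{theta} of $\chi$; the corollary is just the statement that a strictly positive inner product between two vectors of bounded length forces their angle to stay uniformly below $\pi/2$.
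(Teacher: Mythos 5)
Your proof is correct and is precisely the (implicit) argument the paper intends: the corollary is stated without proof immediately after \eqref{theta}, and the only content is that the uniform lower bound $\chi=\gamma\cdot\beta\geq\theta>0$ together with the a priori bounds $|\gamma|=|Dh|\leq 1$ and $|\beta|\leq(1+\epsilon_0)|D\bar h|$ forces $\cos\angle(\beta,\gamma)\geq\theta/B_0$, hence $\angle(\beta,\gamma)\leq\arccos(\theta/B_0)<\pi/2$.
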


\subsection{cost-convexity}

\begin{lemma}
\label{c convexity lemma}Suppose $u\left(  x\right)  $ is a solution to
(\ref{OT}) on a domain in $%
\mathbb{R}
^{n}$. \ \ If $D^{2}u\geq2\epsilon_{0},$ and the cost function differs from
the Euclidean cost function by less than then $\epsilon_{0}$ in $C^{2},$ then
$u$ is $c$-convex, and the mapping $T(x,u)\ $is one to one.
\end{lemma}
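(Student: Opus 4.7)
The plan is as follows. Fix $x_{0}\in\Omega$ and set $y_{0}=T(x_{0},Du(x_{0}))\in\bar{\Omega}$. Consider the auxiliary function
\[
\phi(x):=u(x)-c(x,y_{0}).
\]
By the very definition of the map $T$, namely $u_{i}(x)=c_{i}(x,T(x,Du))$, one has $D\phi(x_{0})=Du(x_{0})-c_{i}(x_{0},y_{0})=0$, so $x_{0}$ is a critical point of $\phi$. The c-convexity inequality at $x_{0}$, in the sign convention used in this paper (and compatible with the Euclidean cost $c_{0}(x,\bar{x})=-x\cdot \bar{x}$ from Remark 1.1), is precisely the assertion that $\phi$ attains its global minimum at $x_{0}$; so the whole game is to show this.

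The crucial observation is that $c_{0}(x,\bar{x})=-x\cdot\bar{x}$ satisfies $c_{0,ij}\equiv 0$, so the $C^{2}$-closeness hypothesis translates to $|c_{ij}(x,\bar{x})|\leq \epsilon_{0}$ as a matrix bound. Since
\[
D^{2}\phi(x)=D^{2}u(x)-c_{ij}(x,y_{0}),
\]
combining this with $D^{2}u\geq 2\epsilon_{0}I$ yields $D^{2}\phi(x)\geq \epsilon_{0}I>0$ throughout $\Omega$, independently of the choice of $y_{0}$. Because the defining function $h$ satisfies $D^{2}h\geq \delta>0$ (assumption (a2)), the set $\Omega=\{h\leq 0\}$ is convex. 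A strictly convex function on a convex domain has at most one critical point, and when such a point exists it is the unique global minimum, so for every $x\in\Omega$
\[
u(x)-c(x,y_{0})\geq u(x_{0})-c(x_{0},y_{0}).
\]
Since $x_{0}\in\Omega$ was arbitrary, $u$ is c-convex.

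The same calculation also yields injectivity of $T$ at no extra cost: if $T(x_{1})=T(x_{2})=y_{0}$, then $x_{1}$ and $x_{2}$ are both critical points of the single function $\phi(\cdot)=u(\cdot)-c(\cdot,y_{0})$, which the Hessian bound above shows is strictly convex on the convex set $\Omega$, forcing $x_{1}=x_{2}$. There is no genuine obstacle in this lemma; the only thing to notice is that taking ``Euclidean cost'' in the bilinear form $-x\cdot\bar{x}$ makes the $x$--$x$ Hessian of $c_{0}$ vanish, so that the perturbative smallness of $c_{ij}$ is directly absorbed by the gap $D^{2}u\geq 2\epsilon_{0}I$. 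Everything else is elementary strict-convexity reasoning.
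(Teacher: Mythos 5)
Your proof is correct, and it rests on the same quantitative input as the paper's: the gap between $D^{2}u\geq 2\epsilon_{0}I$ and the bound $|c_{ij}|\leq\epsilon_{0}$ coming from $C^{2}$-closeness to the bilinear cost $-x\cdot\bar{x}$. The packaging differs somewhat. The paper argues by contradiction: it takes a locally supporting cost function $c_{y_{0}}(x)=-x\cdot T(x_{0})+\phi(x,T(x_{0}))$ that touches $u$ again at some $x_{1}$, and compares $\langle Du(x_{1})-Du(x_{0}),x_{1}-x_{0}\rangle\geq 2\epsilon_{0}|x_{1}-x_{0}|^{2}$ against the at-most-$\epsilon_{0}|x_{1}-x_{0}|^{2}$ variation of $Dc_{y_{0}}$ along the same segment. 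You instead observe directly that $u-c(\cdot,y_{0})$ is uniformly strictly convex on $\Omega$, so its unique critical point is its global minimum; this yields $c$-convexity and injectivity of $T$ in one stroke. Both arguments integrate the Hessian bound along the segment $[x_{0},x_{1}]$ and hence both need $\Omega$ convex; you make this explicit (correctly deducing it from $D^{2}h\geq\delta>0$ in (a2)), whereas the paper leaves it implicit. Your version is, if anything, the cleaner of the two.
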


\begin{proof}
\ Suffice to consider the $c=-x\cdot y+\phi(x,y),$ where $\phi$ is small in
$C^{2}(\Omega).$ At a point $x_{0}$, we have $Du(x_{0})=Dc\left(
x_{0},T(x_{0},Du\right)  )=\ -T(x_{0},Du)+D\phi(x_{0},T(x_{0})).$ \ At another
point, $x_{1}$ \
\[
\langle Du(x_{1})-Du(x_{0}),x_{1}-x_{0}\rangle\geq2\epsilon_{0}|x_{1}%
-x_{0}|^{2}.
\]
Now suppose that $u$ is not strictly $c$-convex. \ \ Clearly the issue would
have to be nonlocal, as locally,%
\[
D^{2}u-D^{2}c\geq2\epsilon_{0}-\epsilon_{0}>0.
\]
Thus we can assume that there is a point $x_{0}$ and a locally supporting cost
function
\[
c_{y_{0}}(x)=-x\cdot T(x_{0})+\phi(x,T(x_{0}))
\]
which contacts $u$ from below near $x_{0}$ but touches $u$ (possibly
transversely) at a point $x_{1}.$ \ It follows that
\[
\langle Dc_{y_{0}}(x_{1})-Dc_{y_{0}}(x_{0}),x_{1}-x_{0}\rangle\geq\langle
Du(x_{1})-Du(x_{0}),x_{1}-x_{0}\rangle
\]
that is%
\[
\left\Vert D^{2}\phi\right\Vert _{C^{1,1}}|x_{1}-x_{0}|^{2}\geq2\epsilon
_{0}|x_{1}-x_{0}|^{2}%
\]
\ a contradiction. \ It follows that $u$ is $c$-convex \ and $T$ is one to one. \ 
\end{proof}

\bigskip

\subsection{Boundary Estimate}

\bigskip Let
\[
M=\max_{\left\vert e\right\vert =1,e\in T_{x}\Omega}w_{ee}%
\]
be the maximum of all eigenvalues $W$ over all of $\Omega$. Throughout this
section we will assume that the maximum occurs on the boundary.

Recalling (\ref{2.3}) and Lemma \ref{Lv}, we may choose a $C_{6}$ so that
\[
L(C_{6}M^{n-2/n-1}h-\bar{h}(T(x,Du))\geq0.
\]
Since $h,\bar{h}$ both vanish on the boundary, the derivatives must satisfy
\[
D_{\beta}\bar{h}\circ T(x,Du)\leq C_{6}M^{n-2/n-1}%
\]
that is%
\[
\bar{h}_{s}T_{i}^{s}\beta_{i}=\bar{h}_{s}c^{sj}w_{ij}\bar{h}_{t}%
c^{ti}=w_{\beta\beta}\leq C_{6}M^{n-2/n-1}.
\]

\begin{lemma}
\label{brendle} At a point $x_{0}$ on the boundary $\partial\Omega,$%
\bigskip\ suppose $w_{ee}\leq M$ for unit directions $e$ which are tangential
to the boundary. If $z$ is any vector in $T_{x_{0}}\Omega,$ then
\[
w_{zz}\leq M|\hat{z}|^{2}+\frac{1}{\theta^{2}}\langle z,\nabla h\rangle
^{2}w_{\beta\beta}.
\]
where
\[
\hat{z}=z-\frac{\gamma\cdot z}{\gamma\cdot\beta}\beta=z-y,
\]
and $\theta$ is defined by (\ref{theta}).
\end{lemma}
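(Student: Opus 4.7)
My plan is to decompose $z = \hat z + y$ as given in the statement, with $y = \alpha\beta$ and $\alpha = \langle z,\nabla h\rangle/\chi$, where $\chi = \gamma\cdot\beta$. By construction $\gamma\cdot\hat z = 0$, so $\hat z$ is tangent to $\partial\Omega$, and the tangential hypothesis immediately gives $w_{\hat z\hat z}\leq M|\hat z|^{2}$. I will then expand
\[
w_{zz}=w_{\hat z\hat z}+2\alpha w_{\hat z\beta}+\alpha^{2}w_{\beta\beta}
\]
and reduce the lemma to showing that the cross term drops out.

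The heart of the argument is the identity $w_{\hat z\beta}=0$. I expect this to be a direct consequence of the transport boundary condition $T(\partial\Omega)\subset\partial\bar\Omega$: since $\bar h\circ T$ vanishes identically on $\partial\Omega$ and $\hat z$ is tangent to $\partial\Omega$ at $x_{0}$, differentiating along $\hat z$ gives
\[
0=\partial_{\hat z}(\bar h\circ T)(x_{0})=\bar h_{s}\,T^{s}_{\,j}\,\hat z^{j}.
\]
The defining relation (\ref{this is w}) rearranges to $T^{s}_{\,j}=c^{si}w_{ij}$, and since $\beta^{i}=\bar h_{s}c^{si}$, the right-hand side is exactly $w_{ij}\beta^{i}\hat z^{j}=w(\hat z,\beta)$. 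Hence the cross term vanishes.

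With this reduction the rest is straightforward algebra together with the obliqueness already proved: we have $w_{zz}\leq M|\hat z|^{2}+\alpha^{2}w_{\beta\beta}$, and the estimate $\chi\geq\theta$ from (\ref{theta}) converts $\alpha^{2}=\langle z,\nabla h\rangle^{2}/\chi^{2}$ into $\langle z,\nabla h\rangle^{2}/\theta^{2}$, yielding the stated inequality. The only genuine conceptual step is recognizing the tangential derivative of $\bar h\circ T$ as $w(\hat z,\beta)$; once this identification is in hand no further input is required, and neither the maximum principle nor positivity of $w$ enters the proof of this particular lemma.
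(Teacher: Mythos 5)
Your proposal is correct and is essentially the paper's own proof: the same decomposition $z=\hat z+y$ with $y$ a multiple of $\beta$, the same observation that $\hat z$ is tangential so that differentiating $\bar h\circ T=0$ along $\hat z$ gives $w(\hat z,\beta)=\bar h_{s}c^{si}w_{ij}\hat z_{j}=0$, and the same use of $\chi\geq\theta$ at the end. One small caveat to your closing remark: replacing $1/\chi^{2}$ by $1/\theta^{2}$ in the term $\alpha^{2}w_{\beta\beta}$ does quietly use $w_{\beta\beta}\geq0$, so positivity of $w$ is not entirely absent from the argument, though this is harmless for the elliptic ($c$-convex) solutions under consideration.
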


\begin{proof}
Dotting with $\gamma$ verifies $\hat{z}$ is tangential, thus
\[
0=\partial_{\hat{z}}\bar{h}\circ T(x,Du)=\bar{h}_{s}T_{j}^{s}\hat{z}_{j}%
=\bar{h}_{s}c^{is}w_{ij}\hat{z}_{j}.
\]
Now
\[
w_{zz}=w_{\hat{z}\hat{z}}+2w_{\hat{z}y}+w_{yy}%
\]
but
\[
w_{\hat{z}y}=w_{ij}\hat{z}_{j}\bar{h}_{s}c^{is}=0
\]
so
\[
w_{zz}\leq M|\hat{z}|^{2}+\left(  \frac{\gamma\cdot z}{\gamma\cdot\beta
}\right)  ^{2}w_{\beta\beta}.
\]

\end{proof}

Now suppose that the maximum tangential derivative $w_{11}=M^{T}$ happens at a
point $x_{0}$, where $e_{1}$ is a tangential direction. \ Define the function
\[
\eta=w_{11}-M^{T}|\hat{e}_{1}(x)|^{2}-\ C_{6}\frac{1}{\theta^{2}}\langle
e_{1},\nabla h(x)\rangle^{2}M^{n-2/n-1}+C_{7}(M+1)(h+\bar{h}\circ T)
\]
where\
\[
|\hat{e}_{1}(x)|^{2}=\left\vert e_{1}-\frac{h_{1}(x)}{\xi(\bar{h}_{s}%
(T)c^{sk}h_{k}(x,T))}\beta\right\vert ^{2}%
\]
with $\xi$ a smooth function satisfying $\xi(t)=t$ for $t>\theta/2,$ and
$\xi(t)\geq\theta/4.$ \ $\ $Now computing, using Lemma \ref{Lw} and
(\ref{2.2})%
\begin{align*}
L\eta &  \geq\delta w_{11}^{2}-\left(  C_{4}+C_{3}\right)  \bar{W}%
W^{2}-C(n)-M\left\vert L|\hat{e}_{1}(x)|^{2}\right\vert )-\ C_{6}\frac
{1}{\theta^{2}}|L\ \langle e_{1},\nabla h(x)\rangle^{2}|M^{n-2/n-1}\\
&  +C_{7}(M+1)\left\{  \frac{9}{10}\delta\left(  \bar{W}+W\right)
-2(1+\mu)\right\}
\end{align*}
and using (considering Lemma \ref{Lv})
\begin{align*}
\left\vert L|\hat{e}_{1}(x)|^{2}\right\vert  &  \leq C_{8}(\bar{W}+1+W)\\
\left\vert LC_{6}\langle e_{1},\nabla h(x)\rangle^{2}\right\vert  &  \leq
C_{8}(\bar{W}+1+W).
\end{align*}
we may choose
\[
C_{7}=C_{8}+\left(  C_{4}+C_{3}\right)  \left(  \bar{M}+M\right)
\]
so that
\[
L\eta\geq0.
\]

Next we show a lower bound on $D_{\beta}w_{11}(x_{0}).$ \ First, observe that
due to Lemma \ref{brendle}, $\eta$ has a maximum at $x_{0}.$ \ It follows from
the Hopf maximum principle that $D\eta\cdot\beta=\nu\gamma\cdot\beta$
\ $\geq0.$ \ Thus (recalling $h_{1}(x_{0})=0$)%
\begin{align}
D_{\beta}w_{11}(x_{0}) &  \geq M^{T}D_{\beta}|\hat{e}_{1}|^{2}+D_{\beta}%
C_{6}\langle e_{1},\nabla h(x)\rangle^{2}M^{n-2/n-1}\label{eq:42}\\
&  -\left\{  C_{8}+\left(  C_{4}+C_{3}\right)  \left(  \bar{M}+M\right)
\right\}  M(D_{\beta}h+D_{\beta}H)\nonumber\\
&  \geq-C(n)M^{T}-\left\{  C_{8}+\left(  C_{4}+C_{3}\right)  \left(  \bar
{M}+M\right)  \right\}  C_{6}(n)(1+M^{2n-3/n-1}).\nonumber
\end{align}

\bigskip Finally we will derive a relation between the maximum $M$ of all
eigenvalues of $w$ and for tangential eigenvalues $M^{T}$. Go to the point
where the maximum of all eigenvalues for $w$ happens. (Again, in this section
we\ assume this happens along the boundary.) \ We diagonalize
$w=diag(M,\lambda_{2},\ldots\lambda_{n})$ with respect to some coordinates
$e_{1,}\ldots e_{n},$ choosing $e_{1}\cdot\gamma\geq0.$\ Now
\[
w_{\beta\beta}=(\beta\cdot e_{1})^{2}M+(\beta\cdot e_{2})^{2}\lambda
_{2}+\ldots(\beta\cdot e_{n})^{2}\lambda_{n}\leq C_{6}(n)M^{n-2/n-1}%
\]
thus%

\begin{equation}
(\beta\cdot e_{1})^{2}\leq C_{6}(n)M^{-1/n-1}.\label{alternative:1}%
\end{equation}
It follows that there is a $C_{10}$ depending on $C_{6}(n)$ and $\Delta,$
(recall Corollary \ref{delta}) such that if $M\geq C_{10},$ then
\[
\left\vert \angle(\beta,e_{1})-\pi/2\right\vert <\frac{1}{2}(\pi/2-\Delta)
\]
in particular
\[
\angle(\gamma,e_{1})\geq\frac{1}{2}(\pi/2-\Delta).
\]
Thus the length of projection of the maximum eigenvector of $w$ onto the
tangent plane is at least some value $\sigma M$ depending on $\Delta.$ So we
may assume that either $M\leq C_{10},$ or the maximum tangential value $M^{T}$
satisfies $M^{T}$ $\geq\sigma M.$

\begin{proposition}
Suppose that the global maximum for $w~$is attained along the boundary. \ Then
if $M\geq$ $C_{10}$, $M$ must satisfy%
\begin{equation}
M^{2}-\left(  C_{4}+C_{3}\right)  M^{n+1}\leq C_{11}%
\end{equation}

\end{proposition}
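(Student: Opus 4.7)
The plan is to combine the Hopf lower bound on $D_\beta w_{11}(x_0)$ from \eqref{eq:42} with a matching upper bound obtained by twice differentiating the boundary identity $\bar h\circ T \equiv 0$ in a tangential direction at $x_0$. Since the discussion just above the proposition already yields $M^T \geq \sigma M$ whenever $M\geq C_{10}$, it is enough to bound $M^T = w_{11}(x_0)$.

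For the upper bound, let $\nu$ denote the unit outward normal to $\partial\Omega$ and extend the maximal tangential direction $e_1$ tangentially along $\partial\Omega$. Twice differentiating $\bar h\circ T \equiv 0$ at $x_0$ along $e_1$ gives, up to a bounded $\partial\Omega$ second fundamental form correction contracted with $D_\nu(\bar h\circ T) = w(\beta,\nu)$,
\[
\bar h_{st}(T)\, T_1^s T_1^t + \bar h_s\, T_{11}^s = O\!\left(M^{(2n-3)/(2(n-1))}\right),
\]
the right-hand side being controlled via Cauchy--Schwarz together with $w_{\beta\beta}\leq C_6 M^{(n-2)/(n-1)}$ from the earlier step and $w_{\nu\nu}\leq M$. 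Differentiating $w_{kj} = c_{sk}(x,T(x))\, T^s_j$ in $x^1$ and contracting with $\beta^k = c^{sk}\bar h_s$ expands
\[
\bar h_s\, T_{11}^s(x_0) = D_\beta w_{11}(x_0) - \beta^k c_{ak1} T_1^a - \beta^k c_{akt} T_1^t T_1^a.
\]
Combining $T_1^s = c^{sk} w_{k1}$ with (c-a1) and $w_{11} = M^T$ yields $|T^{\cdot}_1|^2 \geq (1-O(\epsilon_0))(M^T)^2$; together with (a2) on $\bar h$ this produces the essential quadratic bound $\bar h_{st}T_1^sT_1^t \geq \delta(1-O(\epsilon_0))(M^T)^2$. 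The cost-derivative remainders are $O(C_3(M^T)^2 + C_3 M^T)$ by (c-a2) and $|T^{\cdot}_1| \leq (1+O(\epsilon_0))M = O(M^T)$ (using $M^T\geq \sigma M$). For $\epsilon_0$ small enough this assembles to
\[
D_\beta w_{11}(x_0) \leq -\tfrac{\delta}{2}(M^T)^2 + O(C_3 M^T) + O\!\left(M^{(2n-3)/(2(n-1))}\right).
\]

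Pairing this upper bound with \eqref{eq:42}, and using \eqref{2.2} to estimate $\bar M \lesssim M^{n-1}$, the dominant term on the Hopf side is of order $(C_3+C_4)M^{(n^2-2)/(n-1)} \leq (C_3+C_4)M^{n+1}$, and so
\[
\tfrac{\delta}{2}(M^T)^2 \leq C(n)M^T + O\!\left((C_3+C_4)M^{n+1}\right) + O(1).
\]
Substituting $M^T \geq \sigma M$ and absorbing the linear $C(n)M^T$ term into $\tfrac{\delta\sigma^2}{4}M^2$ plus a constant yields the desired $M^2 - (C_3+C_4)M^{n+1} \leq C_{11}$ after relabelling.

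The main obstacle is the tangential-second-derivative calculation: one must extend $e_1$ tangentially so only a bounded $\partial\Omega$ second fundamental form correction survives, and then use $c_{sk}T^s_j = w_{kj}$ to convert $\bar h_s T_{11}^s$ into $D_\beta w_{11}$ while ensuring every cost-derivative byproduct is absorbed either by the smallness of $\epsilon_0$ or into the $(C_3+C_4)M^{n+1}$ tolerance. This is precisely where assumption (a2) on $\bar h$ --- log-concavity of the target density encoded in the defining function --- produces the critical $\delta(M^T)^2$ term that drives the estimate, matching the heuristic \eqref{heu}.
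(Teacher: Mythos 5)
Your argument is correct and follows essentially the same route as the paper: twice differentiating $\bar h\circ T=0$ tangentially at $x_0$, using (a2) on $\bar h$ to produce the $\delta (M^T)^2$ term, rewriting $\bar h_s T^s_{11}$ as $w_{11,\beta}$ plus cost-derivative remainders (the paper's citation of [MTW, 4.11]), and pairing with the Hopf bound \eqref{eq:42} and $M^T\ge\sigma M$ before absorbing via Young's inequality. The only cosmetic difference is your Cauchy--Schwarz bound $|w(\beta,\nu)|\le (w_{\beta\beta}\,w_{\nu\nu})^{1/2}$ for the second fundamental form contribution, where the paper uses $w_{\beta\beta}\le C_6 M^{(n-2)/(n-1)}$ directly; both exponents are subquadratic and harmless.
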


\begin{proof}
Differentiating $\bar{h}\circ T(x,Du)$ twice tangentially,
\begin{align}
\partial_{11}\bar{h}\circ T(x,Du)) &  =\bar{h}_{s}T_{11}^{s}+\bar{h}_{st}%
T_{1}^{s}T_{1}^{t}=-\langle\nabla\bar{h}\circ T,II(1,1)\rangle\label{diff 11}%
\\
&  =\bar{h}_{p}\left(  c^{pk}w_{11,k}+c^{pk}c_{11,s}T_{k}^{s}-c^{pk}%
c_{k1,s}T_{1}^{s}-c^{pk}c_{ks,1}T_{1}^{s}-c_{kst}c^{pk}T_{1}^{s}T_{1}%
^{t}\right)  \nonumber\\
&  +\bar{h}_{st}c^{si}w_{i1}c^{ti}w_{j1}%
\end{align}
using \cite[4.11]{MTW}. \ Now using $\bar{h}_{p}c^{pk}w_{11,k}=w_{11,\beta}$,
(\ref{eq:42}) and the discussion in the previous paragraph we conclude that if
$M\geq C_{10},$
\begin{align*}
&  \delta\sigma M^{2}-C(n)M_{T}-\left\{  C_{8}+\left(  C_{4}+C_{3}\right)
\left(  \bar{M}+M\right)  \right\}  C_{6}(n)M^{2n-3/n-1}-C_{3}W^{2}\\
&  \leq C_{6}M^{n-2/n-1}.
\end{align*}
Using Young's inequality to clean up the expression, we have
\begin{equation}
M^{2}-\left(  C_{4}+C_{3}\right)  M^{n+1}\leq C_{11}.\label{split:2}%
\end{equation}

\end{proof}

\section{Proof of Theorem}

We now go through the alternatives and make our choice of constants, in order
to bound $w$ and consequently $D^{2}u$. 

First, if the maximum happens in the interior, then (\ref{split1})%
\begin{equation}
M^{2}-\left(  C_{4}+C_{3}\right)  M^{n+1}\leq C_{12}.\label{b1}%
\end{equation}
If not, then either (\ref{split:2})
\begin{equation}
M^{2}-\left(  C_{4}+C_{3}\right)  M^{n+1}\leq C_{11}\label{b2}%
\end{equation}
or
\begin{equation}
M\leq C_{10},\label{b3}%
\end{equation}
by the discussion surrounding (\ref{alternative:1}).

So we simply must choose $\left(  C_{4}+C_{3}\right)  $ small enough, say
\[
\left(  C_{4}+C_{3}\right)  \leq\varepsilon_{0}%
\]
so that the noncompact region defined by (\ref{b1}) does not intersect the
compact regions defined by (\ref{b2}) and (\ref{b3}), similarly for the
noncompact region defined by (\ref{b2}). \ \ Further, in order to have
$c$-convexity, we must assume that the conditions of Lemma
\ref{c convexity lemma} are satisfied. \ The upper bounds in the above
alternatives provide lower bounds on the Hessian, so we choose $C_{3}$ small
enough so that Lemma \ref{c convexity lemma} is satisfied. \ 

Now by the theory of Delanoe [D], Caffarelli [C1] and Urbas [U] we have a
classical solution to the problem for distance squared
\[
c^{0}(x,y)=|x-y|^{2}\text{/2}%
\]
in Euclidean space. \ 

We use the method of continuity. Openness is provided by Theorem 17.6 in GT,
where we set%
\[
G:C^{2,\alpha}(\Omega)\times\lbrack0,1]\rightarrow C^{0,\alpha}\left(
\Omega\right)  \times C^{1,\alpha}\left(  \partial\Omega\right)
\]
with
\begin{align*}
G(u,t) &  =\\
&  \left(
\begin{array}
[c]{c}%
\ln\det\left[  u_{ij}-c_{ij}^{(t)}(x,T^{(t)}(x,Du)\right]  -h(x)+\bar
{h}(T^{(t)}(x,Du))-\ln\det\left[  c_{is}^{(t)}(x,T^{(t)}(x,Du))\right]  ,\\
\bar{h}(T^{\left(  t\right)  }(x,Du))
\end{array}
\right)
\end{align*}
where the cost function is changing from Euclidean to $c$ as
\[
c^{(t)}=(1-t)c^{0}+tc
\]
and $T^{(t)\text{ }}$defined by
\[
Dc^{(t)}(x,T^{(t)}(x,Du))=Du.
\]
Our initial solution $u_{0}$ is smooth , so it satisfies the above estimates
(\ref{b1}, etc) with $C_{3,}C_{4}=0.$ \ These bounds change continuously with
$t$ so $D^{2}u$ must stay in the compact components of (\ref{b1}) (\ref{b2})
and (\ref{b3}). \ As is standard for this problem, we cite [LT] to obtain the
$C^{2,\alpha}$ estimates.\ By [GT] Theorem 17.6, we have openness in $t,$ and
the estimates give us closedness as long as $\left\vert D^{4}c^{\left(
t\right)  }\right\vert ,\left\vert D^{3}c^{\left(  t\right)  }\right\vert
\ \leq$\ $\varepsilon_{0}$. \ \ This completes the proof of Theorem 1.1.

\section{Theorem 2}

\bigskip First we employ a change of coordinates so that
\[
c_{is}(x_{0},\bar{x}_{0})=-I_{n}.
\]

\begin{proof}
Then, on a\ product of very small balls $B_{1/\lambda}(x_{0})\times
B_{1/\lambda}(\bar{x}_{0})$ we have
\[
\frac{1}{C_{2}}\left\vert \xi\right\vert ^{2}\leq-c^{si}\xi_{i}\xi_{s}\leq
C_{2}\left\vert \xi\right\vert ^{2}%
\]
for some $C_{2\text{ }}$ near $1,$ and $\left\vert D^{3}c\right\vert
,\left\vert D^{4}c\right\vert \leq C$ which may be large but finite.

We now rescale and consider the following problem on $B_{1}(0)\times
B_{1}(\bar{0})$: \ Let
\[
c^{(\lambda)}(y,\bar{y})=\lambda^{2}c(\frac{y}{\lambda},\frac{\bar{y}}%
{\lambda})
\]
be the cost function, and let the distributions to be transported be
Gaussians, satisfying (a1-3) on $B_{1}(0),B_{1}(\bar{0}).$

This cost function $c^{(\lambda)}$ \ now satisfies the conditions in our first
theorem, as we see that choosing $\lambda$ large enough will make the third
and fourth derivatives arbitrarily small. \ \ 

It follows by Theorem 1.1 that the solution to this rescaled optimal
transportation problem is smooth. \ \ \ However, the coordinate change and
"change of currency" do not change the underlying optimal transportation
problem. \ Thus we also have smoothness for the solution of the problem
sending
\[
m=e^{-\lambda^{2}|x-x_{0}|^{2}/2}\chi_{B_{1/\lambda}(x_{0})}%
\]
to
\[
\bar{m}=e^{-\lambda^{2}|\bar{x}-\bar{x}_{0}|^{2}/2}\chi_{B_{1/\lambda}(\bar
{x}_{0})}.
\]
This completes the proof.
\end{proof}

\end{document}